\newtheorem{thm}{Theorem}[section]
\newtheorem{pro}[thm]{Proposition}
\newtheorem{cor}[thm]{Corollary}
\newtheorem{lem}[thm]{Lemma}
\newcommand{\cD}{{\mathcal{D}}}
\newcommand\CC{\mathbb{C}}
\newcommand\TT{\mathbb{T}}
\newcommand\DD{\mathbb{D}}
\DeclareMathOperator{\supp}{supp}
\DeclareMathOperator{\dist}{dist}
\begin{document}

\title[Havin-Mazya uniqueness Theorem for Dirichlet spaces ]{Havin-Mazya type uniqueness Theorem for Dirichlet spaces}

\author[H.Bahajji]{H. Bahajji-El Idrissi}
\address{Laboratoire Analyse et Applications URAC/03. B.P. 1014 Rabat, Morocco}
\email{hafid.fsr@gmail.com}

\author[O.El-Fallah]{O. El-Fallah}
\address{Laboratoire Analyse et Applications URAC/03,  Mohammed
V University in Rabat, B.P. 1014 Rabat, Morocco}
\email{elfallah@fsr.ac.ma}

\author[K.Kellay]{K. Kellay}
\address{IMB\\Universite de  Bordeaux \\
351 cours de la Liberation\\33405 Talence \\France}
\email{kkellay@math.u-bordeaux.fr}

\keywords{weighted Dirichlet spaces, capacity, uniqueness set} 
\subjclass[2000]{primary 30H05; secondary 31A25, 31C15.}
\thanks{Research partially supported by "Hassan II Academy of Science and Technology" for the first and the
 second authors. The research of third author is partially supported by the project ANR-18-CE40-0035 and the Joint French-Russian Research Project PRC-CNRS/RFBR 2017-2019. }

\begin{abstract}
Let $\mu$ be a positive finite Borel measure on the unit circle. The associated Dirichlet space $\cD(\mu)$  consists of holomorphic functions on the unit disc whose derivatives are square integrable when weighted against the Poisson integral of $\mu$.  We  give a sufficient condition   on  a Borel subset $E$ of the unit circle which ensures that $E$ is a uniqueness set for 
$\cD(\mu)$. {We also give somes  examples of positive Borel measures $\mu$  and uniqueness sets for $\cD(\mu)$.}
 \end{abstract}
\maketitle

\section{Introduction}

Let $\mu$ be a positive finite Borel measure on the unit circle $\TT$,  the {\it harmonically weighted Dirichlet space}  $\cD^h(\mu)$ is the set of all function{s}  $f\in L^2(\TT)$ for which 
 $$\cD_{\mu}(f)=\int_{\TT}\cD_\xi(f)d\mu(\xi)<\infty,$$
 where $\cD_\xi(f)$ is the local Dirichlet integral of $f$ at $\xi\in \TT$ given by
 $$\cD_{\xi}(f):=\int_{\TT}\Big|\frac{f({\zeta})-f(\xi)}{\zeta-\xi}\Big|^2\frac{\vert d\zeta\vert}{2\pi}.$$
 The space $\mathcal{D}^h(\mu)$ is endowed with the  norm 
$$\|f\|_{\mu}^{2}:=\|f\|^{2}_{L^2(\TT)}+\cD_{\mu}(f).$$
 The analytic weighted Dirichlet space $\cD(\mu)$  is defined { by}   
$$\cD(\mu)=\{f\in  \cD^h(\mu)\text{ : } \widehat{f}(n)=0,  \; n<0\}.$$
Let $\DD$ be the open unit disc in the complex plane. Let $H^2$ denote the Hardy space of analytic functions on $\DD$. { As usual, we use the identification} $H^2=\{f\in L^2(\TT)\text{ : } \widehat{f}(n)=0,  \; n<0\}$.  Since $\cD(\mu)\subset H^2$,
every function $f\in \cD(\mu)$ has non-tangential limits almost everywhere on $\TT$. We denote by $f(\zeta)$ the non-tangential limit of $f$ at $\zeta\in \TT$ if it exists.  Note that by Douglas Formula the Dirichlet-type  space $\cD(\mu)$  is the set of analytic  functions $f\in H^2$, such that
  $$\int_\DD |f'(z)|^2 P_\mu(z) dA(z)<\infty, $$
where $dA(z)=dxdy/\pi$ stands for the normalized area measure in $\DD$ and $P_\mu$ is the Poisson integral of $\mu$ given by
$$P_\mu(z):=\int_\TT\frac{1-|z|^2}{|\zeta-z|^2}d\mu(\zeta),\qquad z\in \DD.$$
For a proof of this {fact } see \cite[Theorem 7.2.5]{EKMR} and for numerous results on the Dirichlet-type space and operators acting thereon
see  \cite{EKMR, EEK1, Ri,Ri1,RS2,RS3}.\\

 { The capacity associated with ${\cD (\mu)}$ is denoted by $c_\mu$ and is given by 
  $$c_\mu(E):=\inf\left\{\|f\|_\mu^2 \text{ : } f\in \cD^h(\mu) \text{, } |f|\geq 1 \text{ a.e. on a neighborhood of } E \right\}.$$
  Since the $L^2$ norm is dominated by  the Dirichlet norm $\|.\|_\mu$, it is obvious that $c_\mu$--capacity 0 implies Lebesgue measure 0.   We say that a  property  holds $c_\mu$-quasi-everywhere ($c_\mu$-q.e.)  if it holds everywhere outside a set of zero $c_\mu$ capacity.  {Note that $c_\mu$-q.e implies a.e. and we have  
 $$c_\mu(E) = \inf\left\{\|f\|_\mu^2 \text{ : } f\in \cD^h(\mu) \text{, } |f|\geq 1 \;\;    c_\mu
 \text{-q.e. on  }E\right\}.$$
 For more details see \cite{G}. }Furthermore every function $f\in \cD(\mu)$ has non-tangential limits $c_\mu$-q.e. on $\TT$ \cite[Theorem 2.1.9]{C}.   { Note also that if $E$
is a closed subset of $\TT$ such that $c_\mu(E)= 0$, then there exists a
function $f\in \cD(\mu)$ uniformly continuous on $\TT$  such that the zero set $\mathcal{Z}(f):=\{\zeta \in \TT \text{ : } f(\zeta)=0\}=E$ \cite[Theorem 1]{EL}.} 

{Recall that if $I\subset \TT$ be the arc of lenght $|I|=1-\rho$ with midpoint $\zeta\in \TT$, then 
\begin{equation}\label{capinterval}
\frac{1}{c_\mu(I)}\asymp 1+\int_{0}^{\rho}\frac{dr}{(1-r)P_\mu(r\zeta)+(1-r)^2},
\end{equation}
where the implied constants are absolute. For the proof see \cite[Theorem 2]{EEK}. }\\
 Let $E$ be a subset of $\TT$, the set $E$ is said to be a uniqueness set for $\cD(\mu)$ if, for each $f\in \cD(\mu)$ such that its non-tangential limit $f= 0$ $c_\mu$--q.e on $E$, we have $f=0$. \\
{ It is well known that for the Hardy space the uniqueness sets coincide with the sets of positive length on $\TT$. } Note that if $d\mu= dm$ the normalized arc measure on $ \TT$, then the space $\cD(\mu)$  coincides with the classical Dirichlet space $\cD$ given by
$$
\cD = \{ f \in H^2:\ \displaystyle \int _{\DD}|f'(z)|^2 dA(z)<\infty \}.
$$
In this case, $c_m$ is comparable to the logarithmic capacity and $c_m (I) \asymp  |\log |I||^{-1}$ for every arc $I\subset \TT$,  \cite[Theorem 14]{Mey}. \\
 Khavin and Maz'ya proved in \cite{KM} that a  Borel subset  $E$ of $\TT$ is a uniqueness set for $\cD$ if there exists  a family  of pairwise disjoint open arcs  $(I_n)$  such that 
 $$
\sum_{n}|I_n|\log \frac{|I_n|}{c_{m}(E\cap  I_n)} =-\infty.
$$
For other uniqueness results for $\cD$ see also \cite{C2,C1,G1,EKMR,K}.

Our aim in this paper is to extend  Khavin and Maz'ya uniqueness theorem to  general Dirichlet spaces $\cD(\mu)$.

 Let $\gamma >1$ and let $I= (e^{ia}, e^{ib})$. The arc $\gamma I$ is given by 
$$\gamma I= (e^{i(a - (\gamma-1)\frac{b-a}{2} )},e^{i(b + (\gamma-1)\frac{b-a}{2}) }).$$
The main result of this paper is the following theorem.
 \begin{thm}\label{unicite}  Let  $E$ be a Borel subset of  $\TT$. Suppose that there exists a family  of  open arcs  $(I_n)$ such that  $(\gamma I_n)$ are pairwise disjoint for some $\gamma>1$ and 
 $$
\sum_{n}|I_n|\log \frac{|I_n|}{c_{\mu}(E\cap  I_n)} =-\infty; 
$$
then $E$ is a uniqueness set for $\cD(\mu)$. 
\end{thm}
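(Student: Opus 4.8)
The plan is to argue by contradiction. Suppose $f\in\cD(\mu)$, $f\not\equiv0$, and $f=0$ $c_\mu$-q.e. on $E$; in particular the non-tangential limit of $f$ vanishes $c_\mu$-q.e. on each trace $F_n:=E\cap I_n$. Since $\cD(\mu)\subset H^2$ and $f\neq0$, the function $f$ lies in the Nevanlinna class, so $\log|f|\in L^1(\TT)$; splitting $\log|f|=\log^+|f|-\log^-|f|$ gives $\int_\TT\log^+|f|\,dm\le\|f\|_{H^2}<\infty$ and $\int_\TT\log^-|f|\,dm<\infty$. The goal is to show that the hypothesis forces
$$\sum_n|I_n|\log\frac{c_\mu(F_n)}{|I_n|}<\infty,$$
which contradicts the assumption, since that series equals $-\sum_n|I_n|\log\frac{|I_n|}{c_\mu(F_n)}=+\infty$. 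The only structural fact I would use about the family is that the dilates $(\gamma I_n)$ are pairwise disjoint; this is precisely what lets me add up localized quantities.

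The heart of the matter is a local estimate controlling the capacity of $F_n$ by data attached to $f$ on the slightly larger arc $\gamma I_n$, of the form
\begin{equation}\label{locest}
\log\frac{c_\mu(F_n)}{|I_n|}\ \le\ \frac{1}{|I_n|}\int_{\gamma I_n}\log|f|\,dm\ +\ C\ +\ \varepsilon_n,
\end{equation}
where $C=C(\gamma)$ is absolute and the errors $\varepsilon_n$ are governed by the local Dirichlet energies $\cD_{\mu,\gamma I_n}(f):=\int_{Q(\gamma I_n)}|f'|^2P_\mu\,dA$ over the Carleson boxes $Q(\gamma I_n)$, in such a way that $\sum_n|I_n|\varepsilon_n<\infty$. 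Granting \eqref{locest}, I would multiply by $|I_n|$ and sum: disjointness of the $\gamma I_n$ gives $\sum_n\int_{\gamma I_n}\log|f|\,dm\le\int_\TT\log^+|f|\,dm<\infty$ and $\sum_n\cD_{\mu,\gamma I_n}(f)\le\cD_\mu(f)<\infty$, while $\sum_n|I_n|=\gamma^{-1}\sum_n|\gamma I_n|<\infty$ controls the constant term. Hence $\sum_n|I_n|\log\frac{c_\mu(F_n)}{|I_n|}<\infty$, the desired contradiction; the summation step is entirely routine once \eqref{locest} is in hand.

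To prove \eqref{locest} I would build an explicit competitor for $c_\mu(F_n)$ out of $f$ itself. Fix a cut-off $\chi_n$ equal to $1$ on $I_n$, supported on $\gamma I_n$, with $|\nabla\chi_n|\lesssim|I_n|^{-1}$, and for a truncation level $t>0$ set $h_{n,t}=\chi_n\,\phi_t(|f|)$ with $\phi_t(s)=\min\bigl(1,t^{-1}\log^+(1/s)\bigr)$. Since $f=0$ $c_\mu$-q.e. on $F_n$ and $\chi_n\equiv1$ there, one has $h_{n,t}=1$ $c_\mu$-q.e. on $F_n$, so $h_{n,t}$ is admissible and $c_\mu(F_n)\le\|h_{n,t}\|_\mu^2$. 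The $L^2$-mass is tied to the length through $\phi_t(|f|)^2\le t^{-1}\log^-|f|$, giving $\|h_{n,t}\|_{L^2}^2\lesssim t^{-1}\int_{\gamma I_n}\log^-|f|\,dm$; the Dirichlet part is handled by the contraction principle $\cD_\zeta(\Phi\circ f)\le\Lip(\Phi)^2\,\cD_\zeta(f)$, which follows at once from the boundary definition of $\cD_\zeta$ and converts $\cD_\mu(h_{n,t})$ into a truncated weighted energy of $f$ over $Q(\gamma I_n)$ plus a cut-off contribution $\cD_\mu(\chi_n)$. The remaining task is to optimize in $t$ and to calibrate the boundary weight $P_\mu$ against the scale $|I_n|$ by means of the arc–capacity formula \eqref{capinterval}, so as to assemble the two main terms of \eqref{locest}.

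The main obstacle is exactly this local estimate, and within it the extraction of the sharp length-normalized logarithm. The naive bound on $\cD_\mu(\phi_t(|f|))$ loses a factor $e^{2t}$, because $|\nabla\log|f||=|f'|/|f|$ is unbounded where $f$ is small, so $\Lip(\phi_t(|\cdot|))$ is of order $e^t/t$ and a crude optimization in $t$ fails to reproduce the clean right-hand side of \eqref{locest}. Overcoming this requires using the weight $P_\mu$ in earnest: near $\TT$ the radial profile of $P_\mu$ is encoded by \eqref{capinterval}, and it is this profile that must absorb the smallness of $|f|$ and turn the truncated energy into the capacity scale $c_\mu(F_n)/|I_n|$. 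Making this interplay precise — rather than the bookkeeping of the second paragraph — is where the real work of the proof lies.
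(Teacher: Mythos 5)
Your outer skeleton (contradiction with $\log|f|\in L^1(\TT)$ for a nonzero $H^2$ function, a local capacitary estimate on each arc, then summation using the disjointness of the $\gamma I_n$) is exactly the paper's, but the proof collapses at its center, for two reasons. First, your key local estimate is \emph{false as stated}: its two sides scale differently in $f$. Take $\mu=\delta_1$ and $E=\{1\}$, which has $c_\mu(\{1\})>0$ by \eqref{capinterval}, and $f(z)=\delta(z-1)\in\cD(\delta_1)$, which vanishes at $1$; letting $\delta\to0$ leaves the left-hand side $\log\bigl(c_\mu(E\cap I)/|I|\bigr)$ fixed while your right-hand side tends to $-\infty$ (the errors $\varepsilon_n$, governed by local energies of $\delta f$, do not blow up). The correct inequality carries the \emph{opposite} sign: combining the paper's Lemma \ref{capacitepoincare} with Jensen's inequality $\exp\bigl(\frac{1}{|I|}\int_I\log|f|\bigr)\le\langle f\rangle_I$ gives
$$\log\frac{c_\mu(E\cap I)}{|I|}\;\le\;-\frac{2}{|I|}\int_{I}\log|f|\,dm\;+\;\log\kappa\;+\;\log\frac{\cD_{\gamma I,\TT,\mu}(f)+\int_{\gamma I}|f|^2}{|I|},$$
reflecting that a zero set of large capacity forces $|f|$ to be small in the mean, not the other way around. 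With this sign your summation must invoke $\int_\TT\log^-|f|\,dm<\infty$ rather than $\int_\TT\log^+|f|\,dm<\infty$ (both hold for nonzero $H^2$ functions, so the outer argument survives the correction), and the error terms must be summed as logarithms of local energies via Jensen's inequality and disjointness — as the paper does for its term $\mathcal{J}$ — not as an additively summable sequence $\varepsilon_n$, which you never justify.

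Second, and more fundamentally, you never prove the local estimate, in either sign. Your proposed competitor $h_{n,t}=\chi_n\min\bigl(1,t^{-1}\log^+(1/|f|)\bigr)$ cannot be handled by the contraction principle, since $s\mapsto\min\bigl(1,t^{-1}\log^+(1/s)\bigr)$ has Lipschitz constant $e^t/t$; you acknowledge this exponential loss yourself and defer its resolution ("where the real work of the proof lies") to an unspecified calibration of $P_\mu$ against \eqref{capinterval}. That deferred step is precisely the theorem's content, and the paper resolves it by a different device that avoids logarithmic truncation entirely: the admissible function is $F=\phi\,\bigl|1-|f|/\langle f\rangle_L\bigr|$ with $L=\frac{1+\gamma}{2}I$, i.e. a Lipschitz contraction of $f$ with constant $1/\langle f\rangle_L$, normalized by the mean of $|f|$ rather than by a truncation level. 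Splitting its energy into the four regions $L\times L$, $\gamma I\times\gamma I$, $(\TT\setminus\gamma I)\times L$ and $L\times(\TT\setminus\gamma I)$ yields the scale-invariant inequality $\langle f\rangle_I^2\,c_\mu(E\cap I)\le\kappa\bigl(\cD_{\gamma I,\TT,\mu}(f)+\int_{\gamma I}|f|^2\bigr)$ with no loss and with no use of \eqref{capinterval}. As it stands, your proposal correctly identifies the difficulty but does not overcome it, so the proof is incomplete.
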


 { The key of the proof of the this theorem is an upper estimate of the average 
  $$\frac{1}{|I|} \int_{I} |f(\zeta)||d\zeta|, $$ 
  for $f\in \cD(\mu)$ vanishing on a set $E\subset \TT$, in terms of capacity  of $E\cap  I$ , for any open arc $I$.\\
 
The next section is devoted to the proof of  Theorem \ref{unicite}. In section 3 we give some examples of positive Borel measures $\mu$  and uniqueness sets for $\cD(\mu)$. }\\

 Throughout the paper, we use the following notations:  $A\lesssim   B$  means that there is an absolute constant $C$ such that $A \leq C B$ and 
 $A \asymp B$ if both $A\lesssim   B$ and $B\lesssim   A$.

  \section{Proof}
  { First, let us introduce some notations which will be useful in the sequel.
 Let $J$ and $L$ be 
 arcs of $\TT$ and let $f$ be a Borel  function defined on $\TT$. We set 
 $$\cD_{J,L,\mu}(f):=\int_{\zeta\in J}\int_{\xi\in L}\frac{|f(\zeta)-f(\xi)|^2}{|\zeta-\xi|^{2}}\frac{|d\zeta|}{2\pi} d\mu(\xi),$$
 and 
 $$\langle f\rangle_J:=\frac{1}{|J|}\int_J |f(\xi)||d\xi|.$$
The following lemma is the key in the proof of theorem \ref {unicite}.
\begin{lem}\label{capacitepoincare} Let $\gamma>1$
and  let $f\in\mathcal{D}(\mu)$  such that  $f|_{E}=0$ for some Borel subset $E$ of $\TT$. Then, for any open arc  $I\subset \TT$ 
$$\langle f\rangle_{ I}^2\leq \kappa\frac{\cD_{\gamma I,\TT,\mu}(f)+\int_{\gamma I}|f|^2}{c_{\mu}(E\cap  I)}, $$
where $\kappa$ is a constant depending only on $\gamma$.
\end{lem}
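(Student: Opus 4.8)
The plan is to convert the vanishing of $f$ on $E$ into an admissible competitor for the capacity $c_{\mu}(E\cap I)$ and then read the inequality off the definition of $c_\mu$. Write $m=\langle f\rangle_{I}$ and $u=|f|$, so that $u\ge 0$, $u=0$ on $E\cap I$, and, by the pointwise bound $\big||f(\zeta)|-|f(\xi)|\big|\le|f(\zeta)-f(\xi)|$, the local Dirichlet integrals of $u$ never exceed those of $f$. Fix a Lipschitz cut-off $\phi$ with $\phi\equiv1$ on $I$, $\supp\phi\subset\gamma I$ and $0\le\phi\le1$, and set $g=\phi\,(1-u/m)$. On $E\cap I$ we have $\phi=1$ and $u=0$, hence $g=1$ there, so $g$ is admissible and $c_{\mu}(E\cap I)\le\|g\|_{\mu}^{2}$. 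It therefore suffices to prove $\|g\|_{\mu}^{2}\lesssim m^{-2}\big(\cD_{\gamma I,\TT,\mu}(f)+\int_{\gamma I}|f|^{2}\big)$ with a constant depending only on $\gamma$, since dividing by $c_{\mu}(E\cap I)$ and multiplying by $m^{2}$ returns exactly the stated bound.

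For the $L^{2}$ part, since $0\le\phi\le1$ is supported in $\gamma I$, we get $\|g\|_{L^{2}}^{2}\lesssim |\gamma I|+m^{-2}\int_{\gamma I}u^{2}$, and the first term is harmless: Cauchy--Schwarz gives $m^{2}|I|=\langle f\rangle_{I}^{2}|I|\le\int_{I}|f|^{2}\le\int_{\gamma I}|f|^{2}$, so that $|\gamma I|\lesssim m^{-2}\int_{\gamma I}|f|^{2}$. For the Dirichlet part I expand $g(\zeta)-g(\xi)=-m^{-1}\phi(\zeta)\big(u(\zeta)-u(\xi)\big)+\big(1-u(\xi)/m\big)\big(\phi(\zeta)-\phi(\xi)\big)$ and split $\cD_{\mu}(g)$ into two pieces. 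The first is clean: using $|\phi(\zeta)|\le\mathbf 1_{\gamma I}(\zeta)$ and $|u(\zeta)-u(\xi)|\le|f(\zeta)-f(\xi)|$ it contributes $\lesssim m^{-2}\cD_{\gamma I,\TT,\mu}(f)$, which is precisely the first term on the right.

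The crux, and the step I expect to be the main obstacle, is the second piece $\int_{\TT}\big|1-u(\xi)/m\big|^{2}\,\cD_{\xi}(\phi)\,d\mu(\xi)$, the cut-off energy weighted by $(1-u/m)^{2}$; this is where the capacity of arcs genuinely enters. Here I would not use a crude bump but a cut-off adapted to the arc, chosen so that its weighted Dirichlet energy is governed by the capacity estimate \eqref{capinterval}, and it is exactly the dilation $\gamma>1$ that supplies the collar $\gamma I\setminus I$ across which $\phi$ may pass from $1$ to $0$ — this is why $\gamma I$, and not $I$, appears on the right. Writing $\big|1-u/m\big|^{2}\lesssim 1+u^{2}/m^{2}$, the term carrying $u^{2}/m^{2}$ should be absorbed into $m^{-2}\big(\cD_{\gamma I,\TT,\mu}(f)+\int_{\gamma I}|f|^{2}\big)$ by comparing $\cD_{\xi}(\phi)$ with the local energy on the collar, while the bare cut-off energy $\int_{\gamma I}\cD_{\xi}(\phi)\,d\mu$ must be shown, after the eventual division by $c_{\mu}(E\cap I)$, to be dominated by the same right-hand side. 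Balancing the transition scale of $\phi$ over $\gamma I\setminus I$ against both $\mu$ and the capacity of $E\cap I$ through \eqref{capinterval} is the delicate point on which the whole estimate rests; once it is carried out, collecting the $L^{2}$ bound, the clean Dirichlet piece and the controlled cut-off piece and rearranging yields the inequality with $\kappa=\kappa(\gamma)$.
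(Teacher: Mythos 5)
Your setup coincides with the paper's: the test function $g=\phi\,(1-|f|/m)$ is admissible for $c_\mu(E\cap I)$, so everything reduces to the norm bound $\|g\|_\mu^2\lesssim m^{-2}\bigl(\cD_{\gamma I,\TT,\mu}(f)+\int_{\gamma I}|f|^2\bigr)$, and your handling of the $L^2$ part and of the piece $m^{-2}\phi(\zeta)^2|u(\zeta)-u(\xi)|^2$ is correct. But the term you yourself call the crux, $T=\int_\TT|1-u(\xi)/m|^2\,\cD_\xi(\phi)\,d\mu(\xi)$, is precisely the content of the lemma, it is left unproven, and the strategy you sketch for it cannot work. Splitting $|1-u/m|^2\lesssim 1+u^2/m^2$ destroys the one cancellation that makes the lemma true. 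Concretely, take $\mu=\delta_{\xi_0}$ with $\xi_0$ in the collar $\gamma I\setminus I$ where $\phi$ transitions (say $\phi(\xi_0)=1/2$), and $f=M\,S$ with $S$ a singular inner function whose singularity lies far from $\gamma I$, with small exponent. Then $|f|=M$ a.e.\ and q.e.\ on $\TT$, so $m=M$, the weight $|1-u/m|^2$ vanishes and $T=0$; but your post-split upper bound equals $(1+u(\xi_0)^2/m^2)\,\cD_{\xi_0}(\phi)\gtrsim 1/|I|$, whereas the target right-hand side is $m^{-2}\bigl(\cD_{\gamma I,\TT,\mu}(f)+\int_{\gamma I}|f|^2\bigr)\asymp|I|$ for small exponent. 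So the bare cut-off energy $\int\cD_\xi(\phi)\,d\mu$ is genuinely not dominated by the right-hand side (the $\mu$-mass of the collar is unrelated to either term there), and likewise $m^{-2}\int u^2\,\cD_\xi(\phi)\,d\mu$ is an integral of $|f|^2$ against $\mu$, which cannot be controlled by the Lebesgue integral $\int_{\gamma I}|f|^2$. Moreover \eqref{capinterval} plays no role in this lemma: capacity enters the paper's proof only through the trivial inequality $c_\mu(E\cap I)\leq\|g\|_\mu^2$, and a fixed Lipschitz cut-off at scale $(\gamma-1)|I|$ suffices; no adapted transition scale exists that repairs the split.

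The missing idea is that $m$ is itself an average over $I$, so by Cauchy--Schwarz $\bigl|m-|f(\xi)|\bigr|^2\leq\frac{1}{|I|}\int_I|f(\eta)-f(\xi)|^2\,|d\eta|$: the weight in $T$ must be converted into genuine differences of $f$, never separated into $1$ and $u^2/m^2$. The paper (using the average over $L=\frac{1+\gamma}{2}I$ rather than over $I$, an inessential difference) then estimates the Dirichlet double integral region by region: when both variables lie in $\gamma I$, the Lipschitz bound $|\phi(\zeta)-\phi(\xi)|\leq c_1|\zeta-\xi|/|I|$ together with $|\eta-\xi|\lesssim|I|$ turns the averaged differences into $\cD_{\gamma I,\gamma I,\mu}(f)$; when $\zeta\in\TT\setminus\gamma I$ and $\xi$ is near $I$, one uses $|\zeta-\xi|\gtrsim_\gamma|I|$ and the same averaging; and in the delicate region where $\zeta$ is near $I$ but the $\mu$-variable $\xi$ lies in $\TT\setminus\gamma I$ --- the only place the full $\cD_{\gamma I,\TT,\mu}(f)$ rather than $\cD_{\gamma I,\gamma I,\mu}(f)$ is required --- one inserts $f(\eta)-f(\xi)+f(\xi)-f(\zeta)$ and uses the comparability $|\eta-\xi|\asymp_\gamma|\zeta-\xi|$ for $\eta,\zeta$ near $I$. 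Your decomposition of $g(\zeta)-g(\xi)$ is the same as the paper's; replacing your split by this Cauchy--Schwarz step and carrying out the three regional estimates is what actually completes the proof.
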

}
\begin{proof} 
  { Without loss  of generality, we can suppose that $I=(e^{-i \theta}, e^{i\theta})$ with $2\theta<\pi$.  In this case 
$\gamma I = (e^{-i\gamma \theta},e^{i\gamma \theta})$.}
 
 Let $\phi$ be a positive function  on  $\TT$, $0\leq \phi\leq 1$, such that  $\displaystyle \supp \phi= \frac{1+\gamma}{2}I$, $\phi=1$ on $ I$ and  
 $$\displaystyle |\phi(\zeta)-\phi(\xi)|\leq\frac{c_1}{|I|}|\zeta-\xi|, \qquad \zeta,\xi\in \TT.$$
where $c_1$ depends  only on $\gamma$.  Set  $\displaystyle L= \frac{1+\gamma}{2} I$ and  consider 
$$F(\zeta):= \phi(\zeta)\Big|1-\frac{|{f}(\zeta)|}{\langle f\rangle_{L}}\Big|,\qquad \zeta\in \TT.$$

Hence  $F\geq 0$ and $F=1$ $c_\mu$-q.e on $E\cap I$. Therefore, 
$$
c_{\mu}(E\cap  I)
\leq \|F\|_{\mu}^2.
$$

 We claim that 
 \begin{equation}\label{normfinal}
\|F\|_{\mu}^2\leq \kappa  \frac{ \cD_{\gamma I,\TT,\mu}({f})+\int_{L}|f|^2}{\langle f\rangle_{L}^{2}}.
\end{equation}
where $\kappa$ depends  only on $\gamma$.

Indeed, we have
 
\begin{eqnarray}\label{inegaliteF}
\|F\|_{\mu}^2&=&\int_\TT|F(\zeta)|^2\frac{|d\zeta|}{2\pi}+ \int_{\TT}\int_{\TT} \frac{|F(\zeta)-F(\xi)|^2}{|\zeta-\xi|^{2}}\frac{|d\zeta|}{2\pi}d\mu(\xi)\nonumber\\
&\leq&  \frac{1}{\langle f\rangle_{L}^{2}}\int_{{L}}{|\langle f\rangle_{L}-|{f}(\zeta)||^2}\frac{|d\zeta|}{2\pi}+ \int_{\gamma I}\int_{\gamma I} \frac{|F(\zeta)-F(\xi)|^2}{|\zeta-\xi|^{2}}\frac{|d\zeta|}{2\pi}d\mu(\xi)\nonumber\\
&&+\frac{1}{\langle f\rangle_{L}^{2}}\int\limits_{\zeta\in\TT\backslash {\gamma I}}\int\limits_{\xi\in L }\frac{|\langle f\rangle_{L}-|{f}(\xi)||^2}{|\zeta-\xi|^{2}}\frac{|d\zeta|}{2\pi}d\mu(\xi)
+\nonumber\\
&&
\frac{1}{\langle f\rangle_{L}^2}\int\limits\limits_{\zeta\in L }\int\limits_{\xi\in\TT\backslash {\gamma I}}\frac{|\langle f\rangle_{L}-|{f}(\zeta)||^2}{|\zeta-\xi|^{2}}\frac{|d\zeta|}{2\pi}d\mu(\xi)\nonumber\\
&=&\frac{A}{2\pi \langle f\rangle_{L}^2}+\frac{B}{2\pi}+\frac{C}{2\pi\langle f\rangle_{L}^2}+\frac{D}{2\pi\langle f\rangle_{L}^2}.
\end{eqnarray}

By Cauchy-Schwarz inequality we have

\begin{equation}\label{CS}
|\langle f\rangle_{L}-\vert f(\xi)\vert|^2\leq \frac{1}{|L|}\int_{L} |{f}(\eta)-{f}(\xi)|^2|d\eta|.
\end{equation}

Hence 

\begin{eqnarray}\label{inegalite1}
A &:= &\int_{L}{|\langle f\rangle_{L}-|{f}(\zeta)||^2}|d\zeta|\nonumber\\
&\leq &  \frac{1}{|L|}\int_{L}\int_{L} |{f}(\eta)-{f}(\xi)|^2|d\eta||d\xi|\nonumber\\
&\leq & \frac{2}{|L|}\int_{{L}}\int_{L} |{f}(\eta)|^2|d\eta||d\xi|+\frac{2}{|L|}\int_{L}\int_{L} |{f}(\xi)|^2|d\eta||d\xi|\nonumber\\
&=& 4\int_{L}|f(\zeta)|^2|d\zeta|.
\end{eqnarray}

Next we estimate $B$. 
 For {$(\zeta,\xi)\in \TT \times \TT$}, we have 

\begin{multline}
\label{eqK}
|F(\zeta)-F(\xi)|=
\Big|\phi(\zeta)\Big(\Big|1-\frac{|{f}(\zeta)|}{\langle f\rangle_{L}}\Big|-\Big|1-\frac{|{f}(\xi)|}{\langle f\rangle_{L}}\Big|\Big)+
(\phi(\zeta)-\phi(\xi))\Big|1-\frac{|{f}(\xi)|}{\langle f\rangle_{L}}\Big|\Big|\\
\leq \frac{1}{\langle f\rangle_{L}}|{f}(\zeta)-{f}(\xi)|+ \frac{c_1}{\langle f\rangle_{L}}\frac{|\zeta-\xi|}{ |I|}|\langle f\rangle_{L}-|{f}(\xi)||.
\end{multline}

By  \eqref{eqK} and \eqref{CS} we get 

\begin{eqnarray}\label{inegalite2}
B &:=&  \int_{\gamma I}\int_{\gamma I} \frac{|F(\zeta)-F(\xi)|^2}{|\zeta-\xi|^{2}}|d\zeta|d\mu(\xi)\nonumber \\
  &\leq & \frac{2}{\langle f\rangle_{L}^2} \int_{\gamma I}\int_{\gamma I} \frac{|{f}(\zeta)-{f}(\xi)|^2}{|\zeta-\xi|^{2}}|d\zeta|d\mu(\xi) +\nonumber\\
  && \frac{2 \times c_1^2 }{\langle f\rangle_{L}^2|I|^2 \vert L\vert} \int_{\gamma I}\int_{\gamma I}  \int_{L}|{f}(\eta)-{f}(\xi)|^2|d\eta||d\zeta|  d\mu(\xi)\nonumber\\
  &\leq & \frac{2}{\langle f\rangle_{L}^2} \int_{\gamma I}\int_{\gamma I} \frac{|{f}(\zeta)-{f}(\xi)|^2}{|\zeta-\xi|^{2}}|d\zeta|d\mu(\xi) + \nonumber\\
  &&\frac{2 \times c_1^2\times \gamma  }{\langle f\rangle_{L}^2|I| \vert L\vert} \int_{\gamma I}  \int_{L}|{f}(\eta)-{f}(\xi)|^2|d\eta|  d\mu(\xi)\nonumber\\
  &\leq & \frac{2}{\langle f\rangle_{L}^2} \int_{\gamma I}\int_{\gamma I} \frac{|{f}(\zeta)-{f}(\xi)|^2}{|\zeta-\xi|^{2}}|d\zeta|d\mu(\xi) +\nonumber\\
&&   \frac{c_2 }{\langle f\rangle_{L}^2} \int_{\gamma I}\int_{\gamma I} 
  \frac{|{f}(\eta)-{f}(\xi)|^2}{|\eta-\xi|^{2}}|d\eta|d\mu(\xi)\nonumber\\ 
  &\leq & c_3\frac{\cD_{\gamma I,\gamma I,\mu}({f}) }{\langle f\rangle_{L}^2} . 
\end{eqnarray}

where $c_2$ and $c_3$ depend  only on $\gamma$.

Using again \eqref{CS} , we see that 

\begin{eqnarray}\label{inegalite3}
C&:=&\int_{\zeta\in\TT\backslash \gamma I}\int_{\xi\in L }\frac{|\langle f\rangle_{{L}}-|{f}(\xi)||^2}{|\zeta-\xi|^{2}}|d\zeta| d\mu(\xi)\nonumber\\
&\leq &\int_{\zeta\in\TT\backslash \gamma I} \frac{|d\zeta|}{d(\zeta,L)^2}\int_{\xi\in L }|\langle f\rangle_{L}-|{f}(\xi)||^2d\mu(\xi)\nonumber\\
&\leq &\frac{c_4}{| I |}\int_{\xi\in L }|\langle f\rangle_{L}-|{f}(\xi)||^2d\mu(\xi)\nonumber\\
&\leq &\frac{c_5 }{|I|^{2}}\int_{\gamma I}\int_{\gamma I} |{f}(\eta)-{f}(\xi)|^2|d\eta|d\mu(\xi)\nonumber\\
&\leq& c_5 \int_{ \gamma I}\int_{\gamma I}\frac{|{f}(\eta)-{f}(\xi)|^2}{|\eta-\xi|^{2}}|d\eta|d\mu(\xi)\nonumber\\
& = & 2\pi c_5\cD_{\gamma I,\gamma I,\mu}({f}),
\end{eqnarray}
where $c_4$ and $c_5$ depend only on $\gamma$.
{

We have
$$\vert \zeta-\xi \vert\geq \frac{\gamma-1}{4}|I| \quad  \text{and}\ \quad \vert \eta-\xi\vert \leq  \frac{1+2\gamma}{\gamma-1} \vert \zeta-\xi\vert \qquad  (\xi,\zeta,\eta)\in (\TT\backslash \gamma I )\times L \times L.$$
}
Therefore,  by \eqref{CS} we oblain
 
\begin{eqnarray}\label{inegalite4}
D&:=&\int_{\xi\in\TT\backslash \gamma I}\int_{\zeta\in L }\frac{|\langle f\rangle_{L}-|{f}(\zeta)||^2}{|\zeta-\xi|^{2}}|d\zeta| d\mu(\xi)\nonumber\\
&\leq &\int_{\xi\in\TT\backslash \gamma I}\int_{\zeta\in L }\frac{1}{ |L|} \int_{\eta\in L}\frac{|f(\eta)-f(\xi)+f(\xi)-{f}(\zeta)|^2}{|\zeta-\xi|^{2}}|d\zeta| |d\eta| d\mu(\xi)\nonumber\\
&\leq &2\int_{\xi\in\TT\backslash \gamma I} \int_{\eta\in L}\int_{\zeta\in L }
\frac{ |f(\eta)-f(\xi)|^2}{|\zeta-\xi|^{2}} |d\zeta| |d\eta| d\mu(\xi) +\nonumber\\&&2\int_{\xi\in\TT\backslash \gamma I}\int_{\zeta\in L} \frac{|f(\xi)-{f}(\zeta)|^2}{|\zeta-\xi|^{2}}|d\zeta|  d\mu(\xi)\nonumber\\
&\leq &\frac{c_6}{ |I|}\int_{\xi\in\TT\backslash \gamma I} \int_{\eta\in L}\int_{\zeta\in L }
\frac{ |f(\eta)-f(\xi)|^2}{|\eta-\xi|^{2}} |d\zeta| |d\eta| d\mu(\xi) +\nonumber\\&&2\int_{\xi\in\TT\backslash \gamma I}\int_{\zeta\in L } \frac{|f(\xi)-{f}(\zeta)|^2}{|\zeta-\xi|^{2}}|d\zeta|  d\mu(\xi)\nonumber\\
&= &c_7\int_{\xi\in\TT\backslash \gamma I}\int_{\zeta\in L } \frac{|f(\xi)-{f}(\zeta)|^2}{|\zeta-\xi|^{2}}|d\zeta|  d\mu(\xi)\nonumber\\
&\leq & 2\pi c_7 \cD_{\gamma I,\TT\setminus\gamma I,\mu}({f}),
\end{eqnarray}

where $c_6$ and $c_7$ depend only on $\gamma$.

Combining  \eqref{inegaliteF},   \eqref{inegalite1}, \eqref{inegalite2}, \eqref{inegalite3} and \eqref{inegalite4} we get \eqref{normfinal} 
and the proof is complete.
\end{proof}
\subsection*{Proof of Theorem \ref{unicite}}
Let  $f\in \cD(\mu)$ such that  $f|_E=0$ and  set $\ell=\sum_n |I_n|$. By Lemma \ref{capacitepoincare} and Jensen's inequality it follows that
{
\begin{eqnarray*}
 \int_{\bigcup  I_n}\log|f(\xi)||d\xi|&=&\sum_n |I_n|\frac{1}{ |I_n|}\int_{ I_n}\log|f(\xi)||d\xi|\\
 &\leq&\frac{1}{2}\sum_n  |I_n| \log\langle f\rangle_{ I_n}^{2}\\
 &\leq&\frac{1 }{2} \sum_n |I_n|\log\Big(\kappa \frac{ \cD_{\gamma I_{n},\TT,\mu}({f}) +
 \int_{\gamma I_n}|f(\zeta)|^2|d\zeta|}{c_{\mu}(E\cap  I_n)} 
 \Big)\\
 &=&\frac{1}{2}( \ell\log\kappa +\mathcal{I}),
  \end{eqnarray*}
where 
$$\mathcal{I}= \sum_n |I_n| \log \frac{|I_n|}{c_{\mu}(E\cap  I_n)} +\underbrace{ \sum_n {|I_n|} \log \Big( \frac{\cD_{\gamma I_n,\TT, \mu}({f})+\int_{\gamma I_n}|f(\zeta)|^2|d\zeta|}{|I_n|}\Big)}_{\mathcal{J}}
$$
Using again   Jensen inequality and since $(\gamma I_n)$ are  pairwise disjoint, we get 
 \begin{eqnarray*}
\mathcal{J}&=& \ell \sum_n \frac{|I_n|}{\ell} \log \Big( \frac{\cD_{\gamma I_n,\TT, \mu}({f})+\int_{\gamma I_n}|f(\zeta)|^2|d\zeta|}{|I_n|}\Big)\\
&\leq&\ell \log\Big[\frac{1 }{ \ell}\Big(\sum_n\cD_{\gamma I_{n},\TT,\mu}({f})+\int_{\gamma I_n}|f(\zeta)|^2|d\zeta| \Big)\Big]\\
&\leq &\ell \log \Big[\frac{1}{ \ell}\big(\cD_\mu(f)+\|f\|^2_2 \big)\Big]\\
&=&\ell \log \frac{\|f\|^{2}_{\mu}}{\ell}.
\end{eqnarray*}
Therefore $\mathcal{I}=-\infty$. }So by Fatou's Theorem we obtain $f=0$ and the proof is complete.

\section{Remarks and examples}
\subsection{}  By  \eqref{capinterval}, see also \cite[Theorem 2]{EEK}, we have  $c_\mu ({\zeta})>0$, for some $\zeta \in \TT$, if and only if 
$$\displaystyle \int _0^1\frac{dr}{(1-r)P_\mu(r\zeta)+(1-r)^2}
<\infty.
$$
In this case, for every $f\in \cD (\mu)$, the non tangential limit, $f(\zeta )$, at $\zeta$ exists. We have the following upper estimate
  \begin{lem}\label{R}
  Let $\mu $ be a positive Borel measure on $\TT$ and let $\zeta \in \TT$ such that   $c_\mu({\zeta })>0$. For $\beta \in (0,1/2)$ and  $f\in \cD _\mu$, we have 
  $$
  |f(z)-f(\zeta)|^2\leq C \left ( \displaystyle \int _{S(\zeta, \beta)}|f'(z)|^2P_\mu(z)dA(z)\right )\left (\displaystyle \int _0^{\beta}\frac{dx}{xP_\mu( (1-x)\zeta )+x^2} \right ),
  $$
  where $S(\zeta , \beta)= \{ w\in \DD:\ 1-|w|<\beta,\ {\rm arg}(w\bar{\zeta})<\beta\}$, $z\in S(\zeta, \beta/4)$ and $C$ is an absolute constant.
  \end{lem}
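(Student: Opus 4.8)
The plan is to estimate $f(z)-f(\zeta)$ by integrating $f'$ along a path from a point near $\zeta$ out to $z$, and to control that integral via Cauchy--Schwarz against the weight $P_\mu$. Since both $z$ and the radial approach point lie in $S(\zeta,\beta/4)$, I would first reduce to estimating $|f(z)-f(\zeta)|$ by splitting it as $|f(z)-f(r_0\zeta)|+|f(r_0\zeta)-f(\zeta)|$ for a suitable radius, but more efficiently one connects $z$ directly to $\zeta$ by a curve that stays inside the Stolz-type region $S(\zeta,\beta)$ and moves essentially radially. Writing $f(z)-f(\zeta)=\int_\Gamma f'(w)\,dw$ along such a curve $\Gamma$, the key is the Cauchy--Schwarz step
\begin{equation*}
|f(z)-f(\zeta)|^2=\Big|\int_\Gamma f'(w)\,dw\Big|^2\leq \Big(\int_\Gamma |f'(w)|^2 P_\mu(w)\,|dw|\Big)\Big(\int_\Gamma \frac{|dw|}{P_\mu(w)}\Big).
\end{equation*}
The first factor must be promoted to the \emph{area} integral $\int_{S(\zeta,\beta)}|f'|^2P_\mu\,dA$, and the second to the claimed one-dimensional integral $\int_0^\beta \frac{dx}{xP_\mu((1-x)\zeta)+x^2}$.

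To carry this out cleanly I would not integrate along a single curve but average over a family of curves, which is the standard device for converting a line integral into an area integral. Concretely, for each $w$ in a truncated cone near $\zeta$ one writes $f'$ along radii and integrates in the angular variable as well, so that the inner estimate naturally produces $\int_{S(\zeta,\beta)}|f'(w)|^2 P_\mu(w)\,dA(w)$ after Fubini. The second, purely geometric, factor requires replacing $P_\mu(w)$ by its radial value $P_\mu((1-x)\zeta)$ with $x=1-|w|$; here I would invoke the near-constancy of $P_\mu$ on Stolz regions, i.e. $P_\mu(w)\asymp P_\mu((1-x)\zeta)$ for $w\in S(\zeta,\beta)$ with $1-|w|=x$, which is a standard Harnack-type property of Poisson integrals. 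The extra $+x^2$ in the denominator comes from bounding the arc-length contribution when $P_\mu$ degenerates, exactly as in the integrand of \eqref{capinterval}, so I would keep a $\max$ of $xP_\mu$ and $x^2$ (equivalently a sum) throughout.

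The main obstacle, and the place where the $\beta/4$ versus $\beta$ discrepancy in the statement becomes essential, is controlling the geometry of the connecting curves: for $z\in S(\zeta,\beta/4)$ I need room to build the averaging family of paths while keeping them inside the larger region $S(\zeta,\beta)$, and I must ensure that along these paths the distance $|w-\zeta|$, the quantity $1-|w|$, and $\mathrm{arg}(w\bar\zeta)$ all stay comparable so that $P_\mu(w)$ can be compared to its radial value with absolute constants. I would therefore spend the most care verifying the Harnack comparison $P_\mu(w)\asymp P_\mu((1-x)\zeta)$ uniformly on $S(\zeta,\beta)$ and checking that the change of variables in the geometric factor yields precisely $\int_0^\beta \frac{dx}{xP_\mu((1-x)\zeta)+x^2}$; the finiteness of this last integral is guaranteed by the hypothesis $c_\mu(\{\zeta\})>0$ via the criterion stated just before the lemma, so the existence of the non-tangential limit $f(\zeta)$ is legitimate and the bound is meaningful.
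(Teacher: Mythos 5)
Your overall strategy (integrate $f'$ along paths toward $\zeta$, apply Cauchy--Schwarz against a weight, promote the energy factor to an area integral by subharmonicity, compare $P_\mu$ with its radial values by Harnack) is indeed the idea the paper points to --- its entire ``proof'' is the one-line citation of \cite[Theorem 2]{EEK} --- but the two assertions your sketch rests on are unjustified, and both fail exactly where the stated lemma is delicate. First, the weight bookkeeping. Subharmonicity gives $|f'((1-x)\zeta)|^2\lesssim x^{-2}\int_{D((1-x)\zeta,x/2)}|f'|^2\,dA$, and the Fubini step that converts a path integral into an area integral regains only one power of $x$; so promoting the energy factor to $\int_{S(\zeta,\beta)}|f'|^2P_\mu\,dA$ forces the Cauchy--Schwarz weight to be $(1-|w|)P_\mu(w)$, not $P_\mu(w)$, and the geometric factor it then produces is $\int_0^\beta\frac{dx}{xP_\mu((1-x)\zeta)}$. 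Your passage from this to $\int_0^\beta\frac{dx}{xP_\mu((1-x)\zeta)+x^2}$ (``keep a max of $xP_\mu$ and $x^2$ throughout'') is not bookkeeping: the stated factor is smaller, so the asserted inequality is strictly stronger, and nothing on your right-hand side can generate the $x^2$ --- in \eqref{capinterval} the $(1-r)^2$ term comes from the $L^2(\TT)$ part of $\|\cdot\|_\mu$, which has no counterpart in the purely local quantity $\int_{S(\zeta,\beta)}|f'|^2P_\mu\,dA$. This gap cannot be closed, because with the $x^2$ present the statement itself fails with an absolute constant: take $\mu=\epsilon\delta_\zeta$ and $f(z)=z$; then $c_\mu(\{\zeta\})\gtrsim\sqrt{\epsilon}>0$, the left side at the radial point $z=(1-\beta/8)\zeta$ equals $\beta^2/64$, while $\int_{S(\zeta,\beta)}|f'|^2P_\mu\,dA\le\epsilon$ and $\int_0^\beta\frac{dx}{xP_\mu((1-x)\zeta)+x^2}\le\int_0^\beta\frac{dx}{\epsilon+x^2}\le\frac{\pi}{2\sqrt{\epsilon}}$, so the right side is $\lesssim\sqrt{\epsilon}\,\beta\to0$. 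Any correct version must either drop the $x^2$, or add to the first factor a term such as $\int_{S(\zeta,\beta)}|f'(w)|^2(1-|w|)\,dA(w)$ (a piece of $\|f\|_{H^2}^2$), or let $C\asymp1+1/\mu(\TT)$ (using $P_\mu(w)\ge\mu(\TT)(1-|w|)/2$).

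Second, the ``near-constancy of $P_\mu$ on Stolz regions'' you plan to verify is false on $S(\zeta,\beta)$: this set is a box, not a cone, and Harnack gives $P_\mu(w)\asymp P_\mu((1-|w|)\zeta)$ with absolute constants only within bounded hyperbolic distance, i.e.\ when $|\arg(w\bar{\zeta})|\lesssim 1-|w|$. For $\mu=\delta_\zeta$ a point $w$ at height $x$ and angle $\theta$ has $P_\mu(w)\asymp x/(x^2+\theta^2)$ versus $P_\mu((1-x)\zeta)\asymp 1/x$, and the ratio $x^2/\theta^2$ (which is exactly the sharp Harnack bound) is unbounded below on the box. This is not a technicality: for tangentially placed $z\in S(\zeta,\beta/4)$, i.e.\ $x_0:=1-|z|\ll\theta_0:=\arg(z\bar{\zeta})$, every connecting path crosses the region governed by the radius through $z$ rather than through $\zeta$, and again the statement itself fails, now even for a probability measure: with $\mu=\delta_\zeta$ and $f(w)=(w-\zeta)/(1-\bar{z}w)\in\cD(\delta_\zeta)$ one gets $|f(z)-f(\zeta)|^2\asymp\theta_0^2/x_0^2$ while the right side is $\asymp\beta/x_0$, incompatible as $x_0\to0$ with $\theta_0\asymp\beta$ fixed. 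What your method honestly delivers --- and what \cite[Theorem 2]{EEK} actually proves, since there the estimate runs along the radius through $z$ itself and against the full norm $\|f\|_\mu^2$, which is precisely how both difficulties are avoided --- is the bound with second factor $\int_0^\beta\frac{dx}{xP_\mu((1-x)\zeta)}$ for $z$ on the radius at $\zeta$ (or in a genuine cone $|\arg(z\bar{\zeta})|\lesssim 1-|z|$). You should prove that weaker statement, or reformulate the lemma before attempting a proof.
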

  \begin{proof}
  One can use the same idea as in \cite[Theorem 2]{EEK}.
  \end{proof}

\subsection{} 
 The capacity $c_\mu$ is comparable with analytic capacity given by 
 $$c_\mu^a(E)=\inf\{\|f\|^2_\mu\text{ : } f\in \cD(\mu) \text{ and } |f|\geq 1 \text{ a.e. on a neighborhood of } E\}.$$
 Indeed we have  $c_\mu(E)\leq c_\mu^a(E)\leq 4c_\mu(E)$, see \cite[Lemma 3.1]{EEK1}. On the other hand 
If  $f\in \cD(\mu)$ then $f\in L^{2}(\TT,d\mu)$ and
 $\|f\|_{L^2(\TT,d\mu)}\leq (1+\mu(\TT)^{1/2})\|f\|_{\mu}$, \cite[Theorem 8.1.2]{EKMR}.   
 Therefore 
$$c_\mu(E)\geq \mu(E)/ (1+\mu(\TT)^{1/2})^{2}.$$
  We obtain the following result
   \begin{cor}
  Let $\gamma >1$
and  let $f\in\mathcal{D}(\mu)$  such that  $f|_{E}=0$ for some Borel subset $E$ of $\TT$. Then, for any open arc  $I\subset \TT$ 
$$\langle f\rangle_{ I}^2\leq \kappa\frac{\cD_{\gamma I,\TT,\mu}(f)+\int_{\gamma I}|f|^2}{\mu(E\cap  I)}, .$$
where $\kappa$ is a constant depending only on $\gamma$.
\end{cor}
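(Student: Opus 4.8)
The plan is to obtain the corollary as an immediate consequence of the key Lemma~\ref{capacitepoincare}, simply by trading the capacity $c_\mu(E\cap I)$ appearing in its denominator for the measure $\mu(E\cap I)$. The only additional input is the comparison $\mu(E\cap I)\leq (1+\mu(\TT)^{1/2})^2\,c_\mu(E\cap I)$ recorded just above the statement, and the whole argument reduces to substituting this into the lemma.

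Concretely, I would first apply Lemma~\ref{capacitepoincare} verbatim to the same $f\in\cD(\mu)$, the set $E$, the parameter $\gamma$ and the open arc $I\subset\TT$, which produces a constant $\kappa_0=\kappa_0(\gamma)$ with
$$\langle f\rangle_I^2\leq \kappa_0\,\frac{\cD_{\gamma I,\TT,\mu}(f)+\int_{\gamma I}|f|^2}{c_\mu(E\cap I)}.$$
I would then recall the comparison inequality and how it is obtained: for any admissible test function $g\in\cD^h(\mu)$ with $|g|\geq 1$ $c_\mu$-q.e. on $E\cap I$, the bound $|g|\geq 1$ also holds $\mu$-a.e. on $E\cap I$ since $\mu$ charges no set of zero $c_\mu$-capacity, so the embedding $\|g\|_{L^2(\TT,d\mu)}\leq(1+\mu(\TT)^{1/2})\|g\|_\mu$ of \cite[Theorem 8.1.2]{EKMR} gives
$$\mu(E\cap I)\leq \int_{E\cap I}|g|^2\,d\mu\leq \|g\|_{L^2(\TT,d\mu)}^2\leq (1+\mu(\TT)^{1/2})^2\|g\|_\mu^2.$$
Taking the infimum over all such $g$ and invoking the q.e. description of $c_\mu$ from the introduction yields $\mu(E\cap I)\leq (1+\mu(\TT)^{1/2})^2\,c_\mu(E\cap I)$, that is $1/c_\mu(E\cap I)\leq (1+\mu(\TT)^{1/2})^2/\mu(E\cap I)$ (the inequality being trivial when $\mu(E\cap I)=0$, as both sides become infinite).

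Finally I would insert this last bound into the displayed estimate from the first step, obtaining
$$\langle f\rangle_I^2\leq \kappa\,\frac{\cD_{\gamma I,\TT,\mu}(f)+\int_{\gamma I}|f|^2}{\mu(E\cap I)}$$
with $\kappa=\kappa_0\,(1+\mu(\TT)^{1/2})^2$. The substitution itself is purely mechanical and no new estimate is needed; the one point worth flagging — and the only genuine subtlety — is the dependence of the constant. Unlike the constant in Lemma~\ref{capacitepoincare}, the factor $(1+\mu(\TT)^{1/2})^2$ arising from the embedding $\cD^h(\mu)\hookrightarrow L^2(\TT,d\mu)$ is not geometric and cannot be absorbed, so strictly speaking $\kappa$ depends on $\gamma$ and on the total mass $\mu(\TT)$ rather than on $\gamma$ alone.
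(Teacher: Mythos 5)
Your proposal is correct and follows exactly the paper's own route: the paper derives $c_\mu(E)\geq \mu(E)/(1+\mu(\TT)^{1/2})^{2}$ from the embedding $\|g\|_{L^2(\TT,d\mu)}\leq (1+\mu(\TT)^{1/2})\|g\|_{\mu}$ of \cite[Theorem 8.1.2]{EKMR} and then substitutes this bound into Lemma~\ref{capacitepoincare}, precisely as you do. Your closing caveat is also well taken: this argument yields $\kappa=\kappa_0(\gamma)(1+\mu(\TT)^{1/2})^{2}$, so the claim that $\kappa$ depends only on $\gamma$ holds only up to the fixed total mass $\mu(\TT)$.
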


\subsection{}First observe that for the Dirac measure $\delta _\zeta$ it is known, (see \cite {EKMR}), that $\cD (\delta _\zeta) = \CC + (z-\zeta)H ^2$. Then a Borel set $E$ is a uniqueness set for $\cD (\delta _\zeta)$ if and only if the $E$ has a positive  Lebesgue measure. This result can be extended to some other discrete measures. For a positive Borel measure $\mu$ we will denote 
by $V_2(\mu ) $ the Newtonian potential given by
$$
V_2(\mu ) (\zeta )= \displaystyle \int _{0}^{2\pi} \frac{d\mu (e^{it})}{|e^{it}-\zeta|^2}.
$$
{D. Guillot showed in \cite[Theorem 2.1]{G1} that if there exists $f\in \cD_\mu$ such that $f=0$ $\mu$ a.e on $\TT$ then 
$$
\displaystyle \int _{\TT}\log V_2(\mu) (\zeta)|d\zeta| < \infty.
$$
{ He also proved that the converse is true for all discrete measures.}} The following result is an immediate consequence of \cite {G1}.
\begin{pro}
Consider a positive sequence $(a_n)_{n\geq 1}$  such that $\displaystyle \sum _{n\geq 1} a_n=1$ and let $\mu = \sum _n a_n \delta _{\zeta _n}$ where $\zeta _n \in \TT$. If 
$$
\displaystyle \int _{\TT}\log V_2(\mu) (\zeta)|d\zeta| < \infty,
$$
then a Borel set $E\subset \TT$  is a uniqueness set for $\cD (\mu) $ if and only if $|E|>0$.
\end{pro}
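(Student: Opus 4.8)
The statement is an equivalence, and I would treat the two implications separately; only the implication ``$|E|=0\Rightarrow E$ is not a uniqueness set'' uses the hypothesis $\int_\TT\log V_2(\mu)(\zeta)|d\zeta|<\infty$ together with the discreteness of $\mu$, the reverse implication being valid for every finite measure.

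For the implication $|E|>0\Rightarrow$ uniqueness, let $f\in\cD(\mu)$ vanish $c_\mu$-q.e.\ on $E$. Since $c_\mu$-q.e.\ implies a.e., $f=0$ a.e.\ on a set of positive length; as $\cD(\mu)\subset H^2$ and the uniqueness sets of $H^2$ are exactly the subsets of $\TT$ of positive length, we get $f\equiv0$. This half needs no assumption on $\mu$.

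For the converse, assume $|E|=0$; I must exhibit a nonzero $f\in\cD(\mu)$ vanishing $c_\mu$-q.e.\ on $E$. Guillot's theorem furnishes the candidate: since $\mu$ is discrete and $\int_\TT\log V_2(\mu)(\zeta)|d\zeta|<\infty$, there is a nonzero $f_0\in\cD(\mu)$ with $f_0=0$ $\mu$-a.e., i.e.\ $f_0(\zeta_n)=0$ at each atom $\zeta_n$. It then suffices to prove that $\{\zeta\in E:\ f_0(\zeta)\neq0\}$ is $c_\mu$-null, and for this I would describe the $c_\mu$-null sets of a discrete measure. One half of the comparison is immediate: the bound $c_\mu(A)\geq\mu(A)/(1+\mu(\TT)^{1/2})^2$ shows that $c_\mu$-null sets are $\mu$-null, so $c_\mu$-q.e.\ vanishing always forces $\mu$-a.e.\ vanishing; the real point is the converse on a length-null set. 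Using the arc estimate \eqref{capinterval} I would first locate the set $P$ of positive-capacity points, where $c_\mu(\{\zeta\})>0$, i.e.\ $\int_0^1 dr/((1-r)P_\mu(r\zeta)+(1-r)^2)<\infty$: this holds at every atom, while for a non-atom it forces $\mu$ to accumulate at $\zeta$, so that $P\subset\overline{\supp\mu}$.

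The main obstacle is then twofold. First, I must upgrade ``$f_0=0$ at the atoms'' to ``$f_0=0$ $c_\mu$-q.e.\ on all of $P$''; I expect this to follow from the $c_\mu$-quasicontinuity (fine continuity) of functions in $\cD(\mu)$, since a positive-capacity accumulation point is non-thin for the set of atoms, which forces the fine limit of $f_0$ there to be $0$. Second, using \eqref{capinterval} again, arcs centred off $P$ have capacity tending to $0$ as they shrink, so a covering argument should show that the length-null set $\{\zeta\in E:\ f_0(\zeta)\neq0\}\setminus P$ carries no $c_\mu$-capacity. Combining the two, $\{\zeta\in E:\ f_0(\zeta)\neq0\}$ is $c_\mu$-null, so $f_0$ witnesses that $E$ fails to be a uniqueness set. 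I expect the quasicontinuity/fine-limit step and the covering estimate to be the only genuinely non-routine ingredients; the model case $\mu=\delta_\zeta$, where $P=\{\zeta\}$ and one may take $f_0=z-\zeta$, already exhibits the mechanism in its simplest form.
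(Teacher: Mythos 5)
Your easy direction ($|E|>0$ implies uniqueness) is correct and coincides with the paper's argument, and starting from Guillot's function $f_0$ is also the paper's starting point. The gap is in the hard direction: you try to use $f_0$ \emph{itself} as the witness of non-uniqueness, which forces you to prove that $c_\mu\big(E\cap\{f_0\neq 0\}\big)=0$ for an \emph{arbitrary} length-null Borel set $E$, i.e.\ a structure theorem for the $c_\mu$-null sets of a discrete measure. Neither of your two ``non-routine ingredients'' is proved, and the second one --- the covering argument --- is invalid as stated. From $c_\mu(\{\zeta\})=0$ you only get that arcs centred at $\zeta$ have capacity tending to $0$ \emph{pointwise in} $\zeta$, with no uniformity; near the accumulation points of the atoms the capacity of an arc is not dominated by a constant times its length (by \eqref{capinterval} it can be of order $\sqrt{a}$, cf.\ Lemma \ref{lemcap}, or $1/\log(1/|I|)$), so covering a length-null set by arcs of small total length does not give small total capacity. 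This is exactly the mechanism by which length-null sets of positive capacity exist (for $c_m$, Cantor sets of measure zero and positive logarithmic capacity), so the implication ``length-null and disjoint from $P$ $\Rightarrow$ $c_\mu$-null'' cannot be obtained by such a covering; whether it is even true for discrete $\mu$ under Guillot's condition is left open by your sketch, and the proposition does not require it. (Your first ingredient --- that $f_0$ vanishes at every positive-capacity point of $\overline{\supp\mu}$ --- is in fact salvageable via the oscillation estimate of Lemma \ref{R}, but you do not prove it either.)

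The missing idea, which is how the paper proceeds, is to build the witness out of \emph{two} functions: since $|E|=0$, a classical theorem gives $\varphi\in H^{\infty}\setminus\{0\}$ whose non-tangential limits vanish at \emph{every} point of $E$; one then checks $\varphi f_0\in\cD(\mu)$ and takes $\varphi f_0$ as the witness, which vanishes pointwise on all of $E$, so that no analysis of $c_\mu$-null sets is needed at all. The membership $\varphi f_0\in\cD(\mu)$ is a short computation that uses discreteness and $f_0(\zeta_n)=0$ exactly once: for $\mu$-a.e.\ $\zeta'$ (i.e.\ at the atoms) one has $(\varphi f_0)(\zeta')=0$, hence
$$\cD_\mu(\varphi f_0)=\int_\TT\int_\TT\Big|\frac{\varphi f_0(\zeta)}{\zeta-\zeta'}\Big|^2|d\zeta|\,d\mu(\zeta')\leq\|\varphi\|_{\infty}^2\int_\TT\int_\TT\Big|\frac{f_0(\zeta)-f_0(\zeta')}{\zeta-\zeta'}\Big|^2|d\zeta|\,d\mu(\zeta')=\|\varphi\|_{\infty}^2\,\cD_\mu(f_0).$$
Without this multiplication trick, your plan reduces the proposition to a potential-theoretic claim that is harder than the proposition itself.
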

\begin{proof}
Since $\cD(\mu) \subset H^2$, it is obvious that the condition $|E|>0$ is sufficient. Conversely,  Let $E\subset \TT$ be a Borel set  such that $|E|= 0$,  there exists a function $\varphi \in H^{\infty}\setminus \{0\}$ such that $\varphi $ has non-tangential boundary limits $0$ on $E$.  By \cite{G}, there exists $f\in \cD (\mu)\cap H^{\infty} \setminus \{0\}$ such that $f(\zeta _n)=0, \ (n\geq 1)$. Now it suffices to prove that $\varphi f \in \cD (\mu)$. Indeed,

\begin{eqnarray}
\cD _\mu(\varphi f)&= & \int _\TT  \int _\TT\Big|\frac{\varphi f(\zeta)- \varphi f (\zeta') }{\zeta - \zeta '}\Big|^2|d\zeta | d\mu(\zeta ')\nonumber\\
& =& \displaystyle \int _\TT \displaystyle \int _\TT\Big|\frac{\varphi f(\zeta) }{\zeta - \zeta '}
\Big|^2|d\zeta|d\mu(\zeta ')\nonumber\\
& \leq &  \| \varphi \| ^2_{\infty}\displaystyle \int _\TT \displaystyle \int _\TT\Big|
\frac{ f(\zeta) }{\zeta - \zeta '}\Big|^2 |d\zeta | d\mu(\zeta ')\nonumber\\
& = &  \| \varphi \| ^2_{\infty}\displaystyle \int _\TT \displaystyle \int _\TT\Big|\frac{ f(\zeta)-f(\zeta ') }{\zeta - \zeta '}\Big|^2|d\zeta | d\mu(\zeta ')\nonumber\\
& =&  \| \varphi \| ^2_{\infty}\cD _\mu(f)\nonumber
\end{eqnarray}
which completes the proof.
\end{proof}
 
Note that for a positive Borel measures $\mu = \sum _n a_n \delta _{\zeta _n}$ where $\zeta _n \in \TT$ such that 
\begin{equation}\label {div}
\displaystyle \int _{\TT}\log V_2(\mu) (\zeta)|d\zeta| = \infty,
\end{equation}
we have no complete characterization for uniqueness sets for $\cD (\mu )$. In this case,  by \cite {G1} the countable set $\{\zeta\in \TT\text{ : } \mu(\zeta) \neq 0 \}$ is a uniqueness set for $\cD (\mu)$. The situation is more complicated than in the previous case. In fact, using theorem \ref{unicite}, we will construct  smaller uniqueness set for $\cD (\mu )$. Before stating this result, we give an example of discrete measures $\mu$ satisfying (\ref {div}). 
A closed set $E\subset \TT$ is said to be a Carleson set if
$$\int_\TT\log \dist(\zeta,E)|d\zeta|>-\infty.$$
This condition is  equivalent to  $|E|=0$ and 
$$\sum_n |\ell_n|\log |\ell_n|>-\infty,$$
where $\ell_n$ are the  complementary intervals of $E$.  For more informations on Carleson sets, see e.g. \cite[\S 4.4]{EKMR}.

\begin{pro}
Let $E$ be a countable closed set of the unit circle such that $E$ is not Carleson set. Then there exists a discrete measure $\mu$ on the unit circle such that $ E=\{\zeta\in \TT\text{ : } \mu(\{\zeta\})\neq 0\}$ and (\ref {div}) is satisfied.
\end{pro}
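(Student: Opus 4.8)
The plan is to construct $\mu$ by hand, placing mass on each point of $E$ in an amount governed by the lengths of the adjacent complementary arcs. Since $E$ is countable we have $|E|=0$, so by the characterisation of Carleson sets recalled above, the hypothesis that $E$ is not a Carleson set is exactly
$$\sum_n |\ell_n|\log\frac{1}{|\ell_n|}=+\infty,$$
where $(\ell_n)$ is the (necessarily infinite) family of complementary arcs of $E$, with $\sum_n|\ell_n|\le 2\pi<\infty$. I record two structural facts: each $\ell_n$ has exactly two endpoints, both lying in $E$, and each point of $E$ is an endpoint of at most two of the arcs $\ell_n$.

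First I would fix an exponent $\theta\in(1,2)$, say $\theta=3/2$, enumerate $E=\{\zeta_j\}_{j\ge1}$, and set
$$\mu=\sum_{\zeta\in E} a(\zeta)\,\delta_\zeta,\qquad a(\zeta):=2^{-j(\zeta)}+\sum_{n:\ \zeta\in\partial\ell_n}|\ell_n|^{\theta},$$
where $j(\zeta)$ is the index of $\zeta$ in the enumeration and $\partial\ell_n$ denotes the two endpoints of $\ell_n$. The term $2^{-j(\zeta)}$ forces $a(\zeta)>0$ for every $\zeta\in E$, so that $E=\{\zeta:\mu(\{\zeta\})\neq0\}$ as required. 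Since each arc contributes its weight to exactly its two endpoints, $\sum_{\zeta}\sum_{n:\zeta\in\partial\ell_n}|\ell_n|^\theta=2\sum_n|\ell_n|^\theta$, and because $\theta>1$ and $|\ell_n|\le 1$ eventually, $\sum_n|\ell_n|^\theta\le\sum_n|\ell_n|<\infty$; together with $\sum_j 2^{-j}=1$ this shows $\mu(\TT)<\infty$.

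The heart of the argument is a pointwise lower bound for the potential. For $\zeta$ in a fixed arc $\ell_n$ the nearest point of $E$ is one of the two endpoints of $\ell_n$, and by construction that endpoint carries mass at least $|\ell_n|^\theta$; hence $V_2(\mu)(\zeta)\ge |\ell_n|^{\theta}/\dist(\zeta,E)^2$. As $\zeta$ traverses $\ell_n$ the quantity $\dist(\zeta,E)$ runs from $0$ to $|\ell_n|/2$, so the elementary computation $\int_{\ell_n}\log\frac{1}{\dist(\zeta,E)}|d\zeta|=|\ell_n|\log\frac{1}{|\ell_n|}+O(|\ell_n|)$ gives
$$\int_{\ell_n}\log V_2(\mu)(\zeta)\,|d\zeta|\ \ge\ \theta|\ell_n|\log|\ell_n|+2\int_{\ell_n}\log\frac{1}{\dist(\zeta,E)}|d\zeta|=(2-\theta)\,|\ell_n|\log\frac{1}{|\ell_n|}+O(|\ell_n|).$$
Because $|E|=0$, the arcs exhaust $\TT$ up to a null set, so $\int_\TT\log V_2(\mu)=\sum_n\int_{\ell_n}\log V_2(\mu)$; summing the displayed bound and using $2-\theta>0$ together with $\sum_n|\ell_n|\log(1/|\ell_n|)=\infty$ and $\sum_n|\ell_n|<\infty$ yields $\int_\TT\log V_2(\mu)=+\infty$, which is precisely (\ref{div}). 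To guarantee the integral is not an indeterminate form, I would note $V_2(\mu)(\zeta)\ge\mu(\TT)/4$ for all $\zeta$ (since $|\zeta_k-\zeta|\le 2$), so $\log V_2(\mu)$ is bounded below and its negative part is integrable.

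The step requiring most care — and where a naive approach fails — is the balancing of $\theta$. One is tempted to first discard the weight correction and demand $\sum_n|\ell_n|\log(1/a_n)<\infty$, but this is impossible in general (for gaps such as $|\ell_n|\asymp 1/(n\log^2 n)$ the finite-mass constraint makes that sum diverge). The correct move is to \emph{keep} the combined per-gap bound: the factor $2$ produced by the two sides of each gap beats the loss $\theta$ incurred by the weights, precisely because $\theta$ can be taken in $(1,2)$, where $\theta>1$ still secures $\sum_n|\ell_n|^\theta<\infty$. The remaining points are routine: that the nearest point of $E$ on a gap is an endpoint, the elementary integral above, and the bookkeeping of the at most two arcs meeting at each point of $E$.
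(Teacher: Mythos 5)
Your proof is correct, and it follows essentially the same strategy as the paper: build a discrete measure charging the endpoints of the complementary arcs $\ell_n$ with weights tied to $|\ell_n|$, lower-bound $V_2(\mu)$ on each gap by the contribution of the nearest endpoint, and integrate the logarithm gap by gap. Two differences are worth recording. First, the paper simply takes the weight equal to $|\ell_n|$ (your $\theta=1$), i.e. $\mu=\sum_n |\ell_n|(\delta_{a_n}+\delta_{b_n})$; then on $\ell_n$ the crude bound $\dist(\zeta,E)\leq |\ell_n|$ already gives $V_2(\mu)\geq |\ell_n|/\dist(\zeta,E)^2\geq 1/|\ell_n|$, hence $\int_{\ell_n}\log V_2(\mu)\geq |\ell_n|\log(1/|\ell_n|)$, and the sum diverges with no need for the refined integral $\int_{\ell_n}\log(1/\dist(\zeta,E))\,|d\zeta|$ or any balancing of exponents. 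Your concern that ``the factor $2$ must beat $\theta$'' is an artifact of insisting on $\theta>1$: any $\theta\in[1,2)$ works even with the crude bound, since $|\ell_n|^{\theta}/\dist^2\geq |\ell_n|^{\theta-2}$, so your computation is correct but slightly more elaborate than necessary. Second --- and this is a point in your favor --- your term $2^{-j(\zeta)}$ guarantees that \emph{every} point of $E$ is an atom of $\mu$. The paper's measure charges only endpoints of complementary arcs, so a point of $E$ that is a two-sided accumulation point of $E$ (for instance the point $1$ when $E=\{1\}\cup\{e^{\pm i/n}: n\geq 1\}$) receives no mass, and the stated requirement $E=\{\zeta\in\TT : \mu(\{\zeta\})\neq 0\}$ fails for it; your correction repairs this at no cost, since adding mass only increases $V_2(\mu)$ and preserves finiteness and discreteness of the measure.
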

\begin{proof}
Let $\TT \setminus E= \bigcup (a_n,b_n)$ and let 
$$\mu = \displaystyle \sum _{n\geq 1}(b_n-a_n)(\delta _{a_n}+\delta _{b_n}).$$
 We have 
\begin{eqnarray*}
\displaystyle  \int _{\TT}\log V_2(\mu) (\zeta)|d\zeta| & \geq & \displaystyle \sum _{n\geq 1}\displaystyle \int _{a_n}^{b_n}\log \frac{b_n-a_n}{\dist^2(e^{it},\{a_n,b_n \})}dt\\
& \geq & \displaystyle \sum _{n\geq 1}\displaystyle \int _{a_n}^{b_n}\log \frac{b_n-a_n}{(b_n-a_n)^2}dt\\
&=&  \displaystyle \sum _{n\geq 1}(b_n-a_n)\log \frac{1}{b_n-a_n}=+ \infty.
\end{eqnarray*}
\end{proof}
\subsection{} {Here we give  an example of positive measure $\mu$ and  a countable closed set $E$ such that $\mu(E)=0$ and $E$ is uniqueness set for $\cD(\mu)$ (see Corollary \ref{UM0}). This result  was  easily obtain from Theorem \ref{unicite}. We can also obtain this result by more direct method using Lemma \ref{R} and  easy estimates of harmonic measures for some suitably domains.}
\begin{lem} \label {lemcap}
Let $\zeta \in \TT$, $a\in (0,1/2)$. Let $\mu $ be a probability measure on $\TT$ such that $\mu \geq  \sum _{k\geq 2}{a}k^{-2}\delta _{\zeta e^{ia^k}}$, then $c_\mu (\{\zeta \})\gtrsim \sqrt{a}.$
\end{lem}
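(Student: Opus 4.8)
The strategy is to lower-bound the capacity $c_\mu(\{\zeta\})$ directly from its definition by producing a good test function, or equivalently by using the interval-capacity estimate \eqref{capinterval} together with the explicit lower bound on $\mu$. Recall that \eqref{capinterval} gives, for the arc $I$ of length $1-\rho$ centered at $\zeta$,
$$
\frac{1}{c_\mu(I)}\asymp 1+\int_0^{\rho}\frac{dr}{(1-r)P_\mu(r\zeta)+(1-r)^2}.
$$
Since $c_\mu(\{\zeta\})=\lim_{\rho\to 1}c_\mu(I)$ (the singleton is the decreasing intersection of arcs), the problem reduces to showing that the integral on the right converges as $\rho\to 1$, with a bound of order $1/\sqrt{a}$. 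Thus the main analytic task is to estimate the Poisson integral $P_\mu(r\zeta)$ from below, using only the point masses $\tfrac{a}{k^2}\delta_{\zeta e^{ia^k}}$ guaranteed by the hypothesis.

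The key step is the lower estimate for $P_\mu(r\zeta)$. Writing $r=1-x$ with $x\in(0,1)$, I would keep only the atoms at the points $\zeta e^{ia^k}$, so that
$$
P_\mu((1-x)\zeta)\;\geq\; \sum_{k\geq 2}\frac{a}{k^2}\,\frac{1-(1-x)^2}{|\zeta e^{ia^k}-(1-x)\zeta|^2}
\;\asymp\; \sum_{k\geq 2}\frac{a}{k^2}\,\frac{x}{x^2+a^{2k}}.
$$
For a given $x$, the dominant contribution comes from those $k$ with $a^k\asymp x$, i.e.\ $k\asymp \log(1/x)/\log(1/a)$; for such $k$ the summand is of order $\tfrac{a}{k^2 x}$. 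This should yield a bound of the shape $P_\mu((1-x)\zeta)\gtrsim \tfrac{a}{x(\log(1/x))^2}$ (up to the $\log(1/a)$ factors absorbed in the range of summation), at least on the dyadic scales $x\asymp a^k$ where the atoms sit; between scales one argues by monotonicity of the Poisson kernel in the relevant regime.

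Feeding this into the denominator $xP_\mu((1-x)\zeta)+x^2$ of \eqref{capinterval} (after the substitution $r=1-x$, $dr=-dx$), the first term becomes $\gtrsim a/(\log(1/x))^2$, which is the decisive one for small $x$ since it dominates $x^2$. The convergence of the integral then follows from
$$
\int_0^{1}\frac{dx}{a/(\log(1/x))^2}\;=\;\frac{1}{a}\int_0^{1}(\log(1/x))^2\,dx\;<\;\infty,
$$
but this crude bound gives $1/c_\mu(\{\zeta\})\lesssim 1/a$, i.e.\ $c_\mu(\{\zeta\})\gtrsim a$, which is weaker than the claimed $\sqrt a$. Getting the sharp $\sqrt a$ requires exploiting the $\log(1/a)$ gain more carefully: the scales $x\asymp a^k$ are spaced geometrically with ratio $a$, and summing the contributions $\int$ over each scale $[a^{k+1},a^k]$ produces a factor $\log(1/a)\asymp$ (number of scales per unit of $\log x$) that must be tracked, so that the integral is really of order $\tfrac{1}{a}\cdot\tfrac{\text{(scale width)}}{(\log(1/a))}$-type and collapses to order $1/\sqrt a$ after balancing.

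The hard part will be precisely this sharp bookkeeping of the $a$-dependence: the naive estimate above loses a power of $a$, and recovering $c_\mu(\{\zeta\})\gtrsim\sqrt a$ demands that one organize the sum over $k$ so as to capture the interplay between the weights $a/k^2$, the geometric spacing $a^k$ of the atoms, and the factor $\log(1/a)$ coming from how many atoms fall in each dyadic window of $x$. I would therefore spend the bulk of the argument on a careful scale-by-scale estimate of $\int_0^\rho \tfrac{dx}{xP_\mu((1-x)\zeta)+x^2}$, confirming that its limit as $\rho\to 1$ is $\lesssim 1/\sqrt a$, and only then invoke \eqref{capinterval} to conclude $c_\mu(\{\zeta\})\gtrsim\sqrt a$.
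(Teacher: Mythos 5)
Your framework is the same as the paper's: apply \eqref{capinterval} to the degenerate arc $\{\zeta\}$ and lower-bound $P_\mu(r\zeta)$ by the Poisson integral of the atomic part $\nu=\sum_{k\geq2}ak^{-2}\delta_{\zeta e^{ia^k}}$. But the proposal does not actually prove the lemma: by your own account, the estimate you carry out yields only $c_\mu(\{\zeta\})\gtrsim a$, and the passage from $a$ to $\sqrt a$ is deferred to an unspecified ``sharp bookkeeping'' and ``balancing''. That deferred step is the whole content of the lemma, and the heuristic you sketch for it points in the wrong direction: the $\sqrt a$ does \emph{not} come from a logarithmic gain at the small scales $1-r\asymp a^k$ where the atoms sit. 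It comes from the opposite, large-scale regime $1-r\geq a$. There, every atom $\zeta e^{ia^k}$ lies at distance $\asymp a^k\leq a^2<1-r$ from $\zeta$, so each contributes essentially its full mass to the Poisson kernel, giving the $r$-independent bound
$$(1-r)P_\mu(r\zeta)\;\gtrsim\;\sum_{k\geq2}\frac{a}{k^2}\;\asymp\;a,$$
and hence
$$\int_0^{1-a}\frac{dr}{(1-r)P_\mu(r\zeta)+(1-r)^2}\;\leq\;\int_0^{\infty}\frac{ds}{a+s^2}\;=\;\frac{\pi}{2\sqrt a}.$$
This single elementary computation is the source of the $1/\sqrt a$. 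The small-scale regime $1-r<a$ then only needs to contribute $O(1)$: writing $1-r=a^{k_r}$ and summing the \emph{tail} $\sum_{k\geq k_r}a/k^2\asymp a/k_r$ (not a single dominant term) gives $(1-r)P_\mu(r\zeta)\gtrsim a\log(1/a)/\log(1/(1-r))$, whence
$$\int_{1-a}^{1}\frac{dr}{(1-r)P_\mu(r\zeta)+(1-r)^2}\;\lesssim\;\frac{1}{a\log(1/a)}\int_0^a\log(1/x)\,dx\;\lesssim\;1.$$

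Your scale-by-scale program, as described, would not recover this. First, your bound $P_\mu((1-x)\zeta)\gtrsim a/(x(\log(1/x))^2)$ is doubly lossy: keeping only the term with $a^k\asymp x$ instead of the tail sum loses a factor $\log(1/x)/\log(1/a)$, and you explicitly discard the $(\log(1/a))^2$ factor, which is not negligible. Second, and more seriously, for $x\in(a,1)$ there is no $k$ with $a^k\asymp x$ at all (all atoms are at distance $\leq a^2$), so windows of the form $[a^{k+1},a^k]$ only cover $x\leq a^2$ and your heuristic says nothing in precisely the range of $x$ that carries the $1/\sqrt a$. The missing idea is thus the split of the integral at $1-r=a$, with the two estimates displayed above; without it the argument stalls at $c_\mu(\{\zeta\})\gtrsim a$.
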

\begin{proof}
Let 
$$\nu =  \displaystyle \sum _{k\geq 2}\frac{a}{k^2}\delta _{\zeta e^{ia^k}}.$$
 By  \eqref{capinterval}, we have 
$$
 \frac{1}{c_\mu(\{\zeta\})} 
  \lesssim  1+ \displaystyle \int _0^1\frac{dr}{(1-r)P_\nu(r\zeta)+(1-r)^2}.
$$
For $a \leq 1-r$, we have
\begin{eqnarray*}
(1-r)P_\nu(r\zeta)& \asymp  & \displaystyle \sum _{k\geq 2}\frac{a(1-r)^2}{k^2((1-r)^2+a^{2k})}\\
&\asymp&   \displaystyle \sum _{k\geq 2}\frac{a}{k^2}
\asymp   a.
\end{eqnarray*}

For $1-r <a $, let $k_r$ be a real number such that $1-r= a^{k_r}$.  Since $x\longmapsto x^2 a^{2x}$; $x\geqslant 1$, is a decreasing function, we have
\begin{eqnarray*}
(1-r)P_\nu(r\zeta)& \asymp   &\displaystyle \sum _{k\geq 2}\frac{a(1-r)^2}{k^2((1-r)^2+a^{2k})}\\
 &= & \displaystyle \sum _{k\geq k_r}\frac{a}{k^2}+   \displaystyle \sum _{k<k_r}\frac{a(1-r)^2}{k^2a^{2k}}\\
&\asymp&   
\frac{a\log (1/a)}{\log (1/1-r)} .
\end{eqnarray*}
Then we obtain

$$
 \frac{1}{c_\mu(\{\zeta\})} \leq  1+\displaystyle \int _{0}^{1-a}\frac{dr}{ a +(1-r)^2}+ \displaystyle \int _{1-a}^{1}\frac{dr}{ \frac{a\log (1/a)}{\log (1/1-r)} +(1-r)^2}                    
                      \lesssim \frac{1}{\sqrt{a}}
$$

and the lemma is proved. 
\end{proof}
\begin{cor}\label {UM0}
There exists a discrete probability measure $\mu$ on the unit circle and a countable closed set $E \subset \TT$, such that $\mu (E)=0$ and $E$ is a set of uniqueness for $\cD (\mu)$.
\end{cor}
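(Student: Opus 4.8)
The plan is to realise $E$ as a sequence converging to a point, together with that limit, and to surround each sequence point by a small cluster of atoms of the type used in Lemma~\ref{lemcap}, so that every $\zeta_n\in E$ acquires positive $c_\mu$--capacity even though $\mu$ carries no mass on $E$ itself; the conclusion then follows from Theorem~\ref{unicite}. First I would fix the accumulation point $1\in\TT$, choose angles $\theta_n\downarrow 0$, put $\zeta_n=e^{i\theta_n}$ and
$$E=\{1\}\cup\{\zeta_n:\ n\ge 2\},$$
a countable closed set. For a parameter $a_n\in(0,1/2)$ I attach to $\zeta_n$ the cluster $\sum_{k\ge 2}a_n k^{-2}\delta_{\zeta_n e^{ia_n^k}}$ and set
$$\mu=\sum_{n\ge 2}\ \sum_{k\ge 2}a_n k^{-2}\,\delta_{\zeta_n e^{ia_n^k}}\ +\ \rho\,\delta_{-1},$$
the last atom (placed at $-1\notin E$) having mass $\rho$ chosen so that $\mu(\TT)=1$. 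Every atom sits at $\zeta_n e^{ia_n^k}$ with $a_n^k>0$, hence off $E$, so $\mu(E)=0$.

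The heart of the matter is the calibration of the parameters. I would take
$$a_n=|I_n|=\frac{\lambda}{n(\log n)^2}\qquad(n\ge 2),$$
with $\lambda>0$ small enough that $a_n<1/2$ for every $n$ and that the total cluster mass $(\pi^2/6-1)\sum_{n\ge2}a_n\le 1$ (so that $\rho=1-(\pi^2/6-1)\sum_n a_n\ge 0$), where $I_n$ is the open arc of length $|I_n|$ centred at $\zeta_n$; and I would space the $\theta_n$ so that $\theta_n-\theta_{n+1}\asymp C|I_n|$ for a constant $C>\gamma$, with $\gamma>1$ fixed. Then $\sum_n a_n<\infty$, the dilated arcs $\gamma I_n$ are pairwise disjoint, and, since the $n$-th cluster occupies only the sub-arc of length $a_n^2\ll|I_n|$ adjacent to $\zeta_n$, the clusters are pairwise disjoint as well. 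This disjointness is what allows the single probability measure $\mu$ to dominate each individual cluster, so Lemma~\ref{lemcap} applies at every $\zeta_n$ and yields the uniform bound $c_\mu(\{\zeta_n\})\gtrsim\sqrt{a_n}$.

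It then remains to run Theorem~\ref{unicite}. Since $\zeta_n\in E\cap I_n$, monotonicity of the capacity gives $c_\mu(E\cap I_n)\ge c_\mu(\{\zeta_n\})\gtrsim\sqrt{a_n}$, so
$$\sum_n|I_n|\log\frac{|I_n|}{c_\mu(E\cap I_n)}\ \le\ \sum_n|I_n|\log\frac{|I_n|}{c\sqrt{a_n}}.$$
With $|I_n|=a_n$ one has $|I_n|/\sqrt{a_n}=\sqrt{a_n}\to 0$, so each logarithm is negative and $\log(|I_n|/c\sqrt{a_n})=\tfrac12\log a_n-\log c\asymp-\tfrac12\log n$; hence the right-hand side is comparable to
$$-\tfrac12\sum_n\frac{\lambda}{n(\log n)^2}\,\log n=-\tfrac{\lambda}{2}\sum_n\frac{1}{n\log n}=-\infty.$$
By Theorem~\ref{unicite}, $E$ is a uniqueness set for $\cD(\mu)$, which is the assertion.

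The main obstacle is the competition among three demands: $\sum_n a_n<\infty$ (finiteness of $\mu$) forces $a_n\to0$, disjointness of the arcs forces $\sum_n|I_n|<\infty$, yet the Khavin--Maz'ya series must still diverge. What reconciles them is the square-root gain of Lemma~\ref{lemcap}, giving $c_\mu(\{\zeta_n\})\gtrsim\sqrt{a_n}\gg a_n$, together with the borderline weight $1/(n(\log n)^2)$, which is summable while $1/(n\log n)$ is not. Beyond this, the only routine points to check are the elementary inequality $a_n^2\ll|I_n|$, which lets the clusters, the arcs and their dilations coexist inside the shrinking gaps, and the existence of angles $\theta_n\downarrow0$ realising the prescribed gaps.
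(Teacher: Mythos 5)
Your proposal is correct and follows essentially the same construction as the paper: points $\zeta_n=e^{i\theta_n}$ accumulating at $1$ with $\theta_n\asymp 1/\log n$, a cluster $\sum_k a_n k^{-2}\delta_{\zeta_n e^{ia_n^k}}$ with $a_n\asymp 1/(n(\log n)^2)$ attached to each $\zeta_n$, the lower bound $c_\mu(\{\zeta_n\})\gtrsim\sqrt{a_n}$ from Lemma~\ref{lemcap}, and divergence of $\sum 1/(n\log n)$ fed into Theorem~\ref{unicite}. Your only departures are cosmetic refinements (the normalizing atom at $-1$ to make $\mu$ a genuine probability measure, and the explicit check that the arcs $\gamma I_n$ are disjoint), which the paper leaves implicit.
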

\begin{proof}
Let $E$ be the set define by
$$E:=\left\lbrace \zeta _n = e^{i\theta_{n}}: n\geqslant 3 \right\rbrace \cup \left\lbrace 1 \right\rbrace, $$
with $\theta_{n}:={1}/{\log(n)}$. Let $a_n = \varepsilon (\theta _{n} - \theta _{n+1})$ for some small $\varepsilon >0$. Consider the sequence $\zeta _{n,k}= e^{i\theta _{n,k}}$ where $ \theta_{n,k}:=\theta_{n}+a_n^k$,  $k\geqslant 2.$ 
The measure $\mu$ is given by 
$$ \mu:= \sum_{n\geqslant 3}\sum_{k\geqslant 2}\frac{a_{n}}{k^2}\delta _{\zeta_{n,k}}. $$
To prove that $E$ is a uniqueness set for $\cD (\mu)$, we use Theorem \ref{unicite}. Let   $I_n = (\theta _{n}-a_n, \theta _n +a_n)$. By lemma \ref {lemcap}; we have $c_\mu(\zeta_n ) \geq \sqrt{a_n} $. Then 
\begin{eqnarray*}
\displaystyle \sum _n|I_n|\log \left ( \frac{c_\mu (I_n\cap E)}{|I_n|} \right)  &=&  \displaystyle \sum _n|I_n|\log \left ( \frac{c_\mu (\zeta _n)}{|I_n|} \right) \\
& \geq & \frac{1}{2}\displaystyle \sum _n a_n\log \left ( \frac{1}{a_n }\right)=+\infty 
\end{eqnarray*}
and the proof is complete.
\end{proof}

\subsection{} {Now we give  an example of positive measure  $\mu$ on $\TT$ without atoms and  a perfect  closed set $E$ such that   $E$ is uniqueness set for $\cD(\mu)$. Let $\ell_n$ be a positive sequence  and let $K$ be the associated generalized Cantor set.  Let $\alpha\in (0,1)$ and consider the measure $d\mu(\zeta)=\dist(\zeta,K)^\alpha dm(\zeta)$.  By \cite[Theorem 2]{EEK} and  \cite[Theorem 2]{EL}  
$$c_\mu(K)=0\iff \sum_{n}2^{-n}\ell_n^{-\alpha}=\infty.$$
Let $I_n=(e^{i(\log(n+1))^{-1}},e^{i(\log n)^{-1}})$. Note that 
$$\sum_{n\geq 2}  |I_n|\log |I_n|=-\infty.$$  We reproduce the generalized Cantor set in each $I_n$, denoted by $K_n$. We have $$c_\mu (K_n\cap I_n)\asymp c_\mu (K) |I_n|^\alpha.$$
Consider now $E=\{1\}\cup_n K_n$, we have $c_\mu(E)=0$ and we get 
$$\sum_{n}|I_n|\log \frac{|I_n|}{c_\mu(E\cap I_n)}=-\infty.$$
So by Theorem \ref{unicite},  $E$ is uniqueness set for $\cD(\mu)$. 
}

\end{document}